\documentclass[11pt,reqno]{amsart}
\usepackage[margin=3cm]{geometry}
\usepackage[utf8]{inputenc}
\usepackage[T1]{fontenc}
\usepackage{graphicx}
\usepackage{amsfonts,amsmath,amssymb}
\usepackage{mathrsfs}
\usepackage{amsthm}
\usepackage{thmtools}
\usepackage{mathtools, nccmath}
\usepackage{enumitem}
\usepackage[colorlinks, linkcolor=blue]{hyperref}
\usepackage{xcolor}

\usepackage{tikz}
\usepackage[
   backend=biber,        
   sorting=nyt,          
   citestyle=alphabetic, 
   bibstyle=alphabetic,  
   isbn=false,
   doi=false,
   url=false,
   maxalphanames=99,
   maxnames=99,
   giveninits=true,
   useprefix=true,
]{biblatex}
\newtheorem{theorem}{Theorem}
\newtheorem{lemma}[theorem]{Lemma}

\theoremstyle{definition}
\newtheorem{remark}[theorem]{Remark}
\newtheorem{definition}[theorem]{Definition}

\newcommand{\E}{\mathbf{E}}
\renewcommand{\P}{\mathbf{P}}

\newcommand{\R}{\mathbf{R}}

\newcommand{\Z}{\mathbf{Z}}
\newcommand{\N}{\mathbf{N}}

\renewcommand{\d}{\mathrm{d}}
\newcommand{\ds}{\displaystyle}
\newcommand{\ind}[1]{\mathbf{1}_{\{#1\}}}

\newcommand{\et}{\text{ and }}
\newcommand{\si}{\text{ if }}
\newcommand{\otw}{\text{ otherwise }}

\newcommand{\PP}{\mathcal{P}}

\author{Corentin Faipeur}
\address[C. Faipeur]{Laboratoire Analyse Géométrie Modélisation  (AGM), CY Cergy Paris Universit\'e, Cergy-Pontoise, France}
\email{corentin.faipeur@cyu.fr}

\title{Factor of i.i.d. with polynomial tails for the long-range Ising model}

\begin{document}
\begin{abstract}
    We prove that the very-high temperature long-range Ising model on $\Z^d$ is a factor of an i.i.d. process, and provide quantitative information on the coding radius. We do not assume that the model is ferromagnetic, but require the interactions to decay faster than $1/n^{2d}$.
    While previous constructions were known (in \cite{GalvesLocherbachOrlandi} under similar assumptions, or in \cite{HarelSpinka} in the entire subcritical phase for ferromagnetic interactions), the novelty of our approach is that we obtain an explicit polynomial upper bound on the tail of the coding radius. In particular, our construction yields a factor map with finite expected coding volume. This answers a question of a recent preprint by Chazottes, Gallo and Takahashi, see \cite[Question 2]{chazottes2026finitarycodinggaussianconcentration}.
\end{abstract}

\maketitle
\section{Introduction and main result}
\subsection{Long-range Ising model}
We consider the long-range Ising model on $\Z^d$, $d\geq 1$.
The configuration space is $\Omega= \{-1,+1\}^{\Z^d}$ and the interactions are ruled by \emph{coupling constants} $(J_{ij})_{i,j \in \Z^d}$ that satisfy $J_{ij}=J_{ji}$ and the following summability condition:
\begin{equation}\label{eq:summability}
    \forall i \in \Z^d, \qquad \sum_{j \in \Z^d} |J_{ij}| < \infty.
\end{equation}
Here, we will \emph{not} assume that the coupling constants are non-negative, so that the model is not necessarily ferromagnetic.
The Hamiltonian of the model is defined for every finite subset $\Lambda$ of $\Z^d$ ($\Lambda \Subset \Z^d$) by
\begin{equation*}
     \forall \sigma \in \Omega, \quad \mathcal H_{J,\Lambda}(\sigma):= - \sum_{\substack{\{i,j\} \cap \Lambda \neq \emptyset \\ i\neq j}} J_{ij} \sigma(i) \sigma(j).
\end{equation*}
The assumption \eqref{eq:summability} ensures that the above Hamiltonian is well defined. We can then consider Gibbs measures associated with a certain inverse temperature $\beta$. Since a sign can be included in the coupling constants, we can restrict ourselves to $\beta \geq 0$ without loss of generality.

\begin{definition}[Gibbs measure]
    For any $\Lambda \Subset \Z^d$, we define the (finite-volume) \emph{Gibbs measure} in $\Lambda$ at inverse temperature $\beta \geq0$ with boundary condition $\omega \in \Omega$ by
    \begin{equation*}
        \forall \sigma \in \Omega, \quad\mu_{\beta, J, \Lambda}^\omega(\sigma) :=\frac{1}{Z^\omega_{\beta, J, \Lambda}}\exp(-\beta \mathcal H_{J,\Lambda}(\sigma))\ind{\forall i \in \Lambda^c,\, \sigma(i)=\omega(i)}
    \end{equation*}
    where $Z^\omega_{\beta, J, \Lambda}$ is the partition function, that is the normalising constant that makes $\mu_{\beta, J, \Lambda}^\omega$ a probability measure.
    Then, we call an \emph{infinite-volume Gibbs measure} any distribution $\mu$ on $\Omega$ that satisfies the Dobrushin--Lanford--Ruelle equation, i.e.
    \begin{equation*}\label{eq:DLR}\tag{DLR}
        \forall \Lambda \Subset \Z^d, \quad\mu(\cdot)= \int_\Omega \mu^\omega_{\beta, J, \Lambda}(\cdot) \d \mu(\omega).
    \end{equation*}
\end{definition}
The above \eqref{eq:DLR} equation means that $\mu$ is an infinite-volume Gibbs measure for the model if and only if for all $\Lambda \Subset \Z^d$, for $\mu$-a.e. $\omega \in \Omega$,
$$\mu(\cdot \mid \forall i \in \Lambda^c,\sigma(i)=\omega(i) )=\mu^\omega_{\beta,J,\Lambda}(\cdot).$$
An important special case is when $\Lambda$ is a singleton:
for all $i \in \Z^d$, we have
$$\mu(\sigma(i)=-1 \mid \forall j \neq i, \sigma(j)=\omega(j))= \dfrac{1}{1+\exp(2\beta S_i(\omega))}$$
and
$$\mu(\sigma(i)=+1 \mid \forall j \neq i, \sigma(j)=\omega(j))= \dfrac{1}{1+\exp(-2\beta S_i(\omega))},$$
where $S_i(\omega):=\sum_{j \neq i} J_{ij} \omega(j)$.

\medskip
We will focus on \emph{translation-invariant} coupling constants, which means that they are of the form $J_{ij} = J(j-i)$ for some even function $J:\Z^d \setminus \{0\} \to \R$.
A classical choice in the literature for the coupling constants is to consider $J(x)=|x|^{-\alpha}$ for some decay exponent $\alpha>d$ (this last condition being equivalent to \eqref{eq:summability}), or say $J(x)=O(|x|^{-\alpha})$ to include non-ferromagnetic models.
Thus, the exponent $\alpha$ is a parameter that controls the decay of the interactions with the distance, and that influences the \emph{phase transition} of the model.

A central question in statistical mechanics is to determine whether there is a unique or several measures satisfying the \eqref{eq:DLR} equation (existence of at least one infinite-volume Gibbs measure is always guaranteed, by compactness arguments).
General criteria, e.g. by Dobrushin \cite{Dobrushin} or van den Berg--Maes \cite{VdBMaes}, ensure that uniqueness holds for $\beta$ small enough.
The first result concerning phase transition for the long-range Ising model was obtained in 1966 by Ginibre, Grossman and Ruelle \cite{GinibreGrossmanRuelle}. They generalise the Peierls argument for the nearest-neighbour Ising model to prove that the long-range model admits several Gibbs measures at large enough $\beta$ in dimension $d\geq 2$ when $\alpha>d+1$.
In the ferromagnetic case, i.e. $J \geq0$, the Griffiths inequalities \cite{Griffiths} (or GKS inequalities \cite{KellySherman}) enable one to prove the coexistence of Gibbs measures for large $\beta$ in the full regime $\alpha>d$.

The one-dimensional case offers an interesting different behaviour: uniqueness of the Gibbs measure has been proved at any inverse temperature in \cite{Ruelle} when $\alpha>2$, ruling out the presence of a phase transition; Kac and Thompson conjectured in \cite{KacThompson} that the model undergoes a phase transition for $\alpha \in (1,2]$; this has been verified by Dyson \cite{Dyson} for $\alpha \in(1,2)$ and then by Fr\"ohlich and Spencer \cite{FrohlichSpencer} in the case $\alpha=2$.

In the present article, we will restrict ourselves to a ``very high temperature'' regime, that is, a perturbative regime of small enough inverse temperature $\beta$. In particular, for any choice of $d$ and $\alpha$ (or $J$), $\beta$ is taken sufficiently small so that uniqueness of the infinite-volume Gibbs measure is achieved.
We denote by $\mu_{\beta, J}$ this unique measure. As translations of $\mu_{\beta, J}$ would also be Gibbs measures, it is clear by uniqueness that $\mu_{\beta, J}$ is translation-invariant.

The purpose of this paper is to show that for $\beta$ small enough, this measure $\mu_{\beta, J}$ can be written as an explicit factor of an i.i.d. field, with a polynomially decaying coding radius.
Prior to stating the result precisely, we provide the necessary background on factor maps and coding radius.

\subsection{Factor maps and coding radius}\label{sec:factormaps}
There is an extensive literature about factors of i.i.d. for random fields, see e.g. \cite{VdB_steif}, \cite{HaggstromSteif}, \cite{Spinka_Ising}, \cite{Spinka_mixing}, \cite{HarelSpinka}, \cite{Faipeur}.
We will not provide a full overview of the topic here, and instead refer to the above references for background and more detailed insights.
Below we give the relevant definitions for our purpose.

Let $\mu$ be a probability measure on $\Omega=\{\pm1\}^{\Z^d}$.
We say that $\mu$ is a \emph{factor of an i.i.d. field} (abbreviated FIID in the following) if for some Borel space $(A,\mathcal{A})$, there exist a measurable map $F:A^{\Z^d} \to \Omega$ which commutes with every translation of $\Z^d$ and a collection $Y=(Y_i)_{i \in \Z^d}$ of i.i.d. (independent and identically distributed) $A$-valued random variables such that $F(Y)$ has distribution $\mu$.
In particular, $\mu$ is necessarily translation-invariant.
The factor map $F$ is called a \emph{coding} for $\mu$.
An important object in the study of codings is the so-called \emph{coding radius}. It quantifies how far one needs to look in the i.i.d. process to determine the value of the random field at some site. For all $y \in A^{\Z^d}$, let
$$r_0(y):= \inf\{r \geq 0 \mid \forall y' \in A^{\Z^d}, y'_{\mid B_r} = y_{\mid B_r} \implies F(y')_0=F(y)_0\}$$
where $B_r$ denotes the ball of radius $r$ around $0$ in $\Z^d$, and $y_{\mid B_r}$ the restriction of $y$ to $B_r$.
The coding radius of the factor map $F$ is the random variable
$$R:=r_0(Y).$$

When $R$ is almost surely finite, we say that $\mu$ is a finitary factor of an i.i.d. field, abbreviated FFIID.
Moreover, we say that $\mu$ is an \emph{FFIID with exponential tails} if the coding radius satisfies for some $c,C>0$,
$$\forall n \geq 1, \qquad \P(R>n) \leq Ce^{-cn};$$
we say that $\mu$ is an \emph{FFIID with polynomial tails} if the coding radius satisfies for some $\rho,C>0$,
$$\forall n \geq 1, \qquad \P(R>n) \leq Cn^{-\rho}.$$

A related notion is the \emph{coding volume}, which is the random variable $|B_R|$; it is therefore of order $R^d$. When looking for finitary coding of random fields, it is valuable to obtain a coding with finite expected coding volume, as it means that the expected number of random variables in the i.i.d. process needed to compute the field at the origin is finite.
For example, this notion allows to distinguish the critical regime from the subcritical and supercritical ones for the nearest-neighbour ferromagnetic Ising model: it is proved in \cite{VdB_steif} that the model is FFIID with exponential tails in the subcritical phase, but that at the critical point, any finitary coding must have infinite expected coding volume; the existence of a finitary coding at the critical point follows from the continuity of the phase transition; another major result of \cite{VdB_steif} is that no finitary coding exists in the supercritical phase.

We end this section by noticing that the tail of the coding radius always controls the \emph{two-point function} of the model.
Let us clarify this for the subcritical Ising model.
We denote by $\langle \cdot\rangle_{\beta,J}$ the expectation with respect to $\mu_{\beta,J}$.
The two-point function is defined by $\langle \sigma_0\sigma_x\rangle_{\beta,J}$, for $x \in \Z^d$; since $\langle \sigma_x\rangle_{\beta,J}=0$ for all $x$, this corresponds to the covariance between two spins under $\mu_{\beta,J}$. It is an important goal in statistical physics to understand the decay of this quantity as $|x| \to \infty$. 
For any coding of the model, whose coding radius is denoted by $R$, the following inequality holds:\footnote{Suppose that $F$ is a coding for $\mu_{\beta,J}$ from the i.i.d. field $Y$, with coding radius $R$; more precisely, let $R_0:=r_0(Y)$ and $R_x:=r_x(Y)$, $r_x$ being defined similarly to $r_0$, but taking $x$ as origin.
Take $r= \lceil|x|/2 \rceil$, so that the balls $B_{r-1}$ and $x+B_{r-1}$ are disjoint.
Write $p=\mathbf P(R\ge r)$ and decompose $\langle\sigma_0\sigma_x\rangle_{\beta,J}=\E[F(Y)_0F(Y)_x]$~as
\begin{equation*}
\E\big[F(Y)_0F(Y)_x\,\mathbf 1_{\{R_0<r\}}\mathbf 1_{\{R_x<r\}}\big]
+ {\E\big[F(Y)_0F(Y)_x \,\mathbf 1_{\{R_0\geq r\}\cup \{R_x\geq r\}}\big] =:(\mathrm I)+(\mathrm{II})}.
\end{equation*}
For the first term, $F(Y)_0\mathbf 1_{\{R_0<r\}}$ and $F(Y)_x\mathbf 1_{\{R_x<r\}}$ have the same distribution, and they are $\sigma(Y_{\mid B_{r-1}})$- and $\sigma(Y_{\mid x+B_{r-1}})$-measurable respectively, hence independent.
Then, as $\E[F(Y)_0]=0$, we have $\E[F(Y)_0\mathbf 1_{\{R_0<r\}}] = -\E[F(Y)_0\mathbf 1_{\{R_0\geq r\}}]$ which is, in absolute value, less than $p$. It gives $|(\mathrm I)|\le p^2$.
For the second term, simply using $|F(Y)_0F(Y)_x| =1$ yields $|\mathrm{(II)}|\leq \P(\{R_0\geq r\}\cup \{R_x\geq r\})=2p-p^2$, because the two events are independent.
Combining these two upper bounds gives $|\langle\sigma_0\sigma_x\rangle_{\beta,J}| \leq p^2+(2p-p^2)=2p$.}

\begin{equation}\label{eq:two_point_fct_and_radius}
    \forall x \in \Z^d, \qquad|\langle\sigma_0\sigma_x\rangle_{\beta,J}| \leq 2 \P(R\geq |x|/2).
\end{equation}

\subsection{Main result and related works}
Fix a function $J:\Z^d \setminus\{0\} \to \R$ encoding the coupling constants. Our main assumption on $J$ is that
\begin{equation}\label{eq:form_J}
    \forall x \neq 0, \qquad |J(x)| \leq |x|^{-\alpha}
\end{equation}
for some decay exponent $\alpha>d$
(N.B. we lose no generality by not including a multiplicative constant in the upper bound, since this constant can be absorbed in $\beta$).
We again stress that this induces no restriction on the sign of the interactions.
Recall that when $\beta$ is small enough, there is a unique infinite-volume Gibbs measure, denoted $\mu_{\beta, J}$, satisfying the \eqref{eq:DLR} equation associated with the Hamiltonian defined by $J$. 
This is in particular the case in the following statement, which is the main result of this article.

\begin{theorem}\label{thm:main}
    For $\alpha>2d$, there exists $\beta_0:=\beta_0(\alpha,d) \in(0,\infty)$ such that for every $\beta<\beta_0$, $\mu_{\beta,J}$ is an FFIID with polynomial tails. More precisely, there is a coding for $\mu_{\beta, J}$ whose coding radius $R$ satisfies
    \begin{equation*}
        \forall n \geq1, \quad \P(R >n) \leq Cn^{d-\alpha}
    \end{equation*}
    for some constant $C:= C(\alpha,\beta,d) \in (0,\infty)$.
\end{theorem}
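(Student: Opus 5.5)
We will build the coding through a Glauber-type perfect-simulation scheme, in the spirit of coupling-from-the-past and of the Galves--Löcherbach--Orlandi construction. The scheme is driven by i.i.d. Poisson clocks at each site together with i.i.d. uniform marks. The plan is to decompose the single-site conditional law as a mixture over finite ranges.

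\textbf{Step 1: the range decomposition.} Since $|S_i(\omega)|\le \sum_{x\ne0}|x|^{-\alpha}<\infty$, for $\beta$ small the probability $p_i(\omega)=(1+e^{-2\beta S_i(\omega)})^{-1}$ lies close to $1/2$. We write it as
\[
p_i(\omega)=\lambda_0\,\tfrac12+\sum_{k\ge1}\lambda_k\, p^{(k)}_i(\omega_{\mid i+B_k}),
\]
where each $p^{(k)}_i$ depends only on spins within distance $k$ and $\lambda_k\ge0$ with $\sum_k\lambda_k=1$. This follows the standard Kalikow/GLO construction, using differences of the truncated functions $p_i(\omega_{\mid B_k}\text{ extended by } +)$. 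The oscillation of $p_i$ outside $B_{k-1}$ is at most $C\beta\sum_{|x|\ge k}|x|^{-\alpha}\le C\beta k^{d-\alpha}$. Hence we may take $\lambda_0\ge 1-C\beta$ and $\lambda_k\le C\beta k^{d-\alpha-1}$, so that in particular $\P(\text{range}\ge k)\le C\beta k^{d-\alpha}$.

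\textbf{Step 2: the backward ancestry tree.} Running the dynamics from time $-\infty$ to time $0$, we determine $\sigma(0)$ by exploring backward in time. At the last clock ring of $0$ we sample a range $K$ with law $(\lambda_k)$ from the i.i.d. marks. If $K=0$ the spin is a fair coin; otherwise we need the spins at the most recent update times of all sites in $B_K$, and we recurse. This exploration is a branching process whose offspring number is $|B_K|$. Its mean is $\sum_k\lambda_k|B_k|\lesssim C\beta\sum_k k^{d}k^{d-\alpha-1}$, which is finite exactly when $\alpha>2d$ and is less than $1$ for $\beta<\beta_0$. The process is therefore subcritical, so it dies a.s. and $\sigma(0)$ is a finitary function of the i.i.d. input. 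The resulting law is stationary for the dynamics, and by uniqueness it equals $\mu_{\beta,J}$. To obtain a factor of a $\Z^d$-indexed i.i.d. field, we attach to each site $i$ its Poisson process on $(-\infty,0]$ and its marks; translation equivariance is then immediate.

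\textbf{Step 3: tail of the spatial extent (the main obstacle).} The coding radius $R$ is bounded by the maximal displacement $M$ of the ancestry tree, namely the maximum over its nodes of the sum of the ranges along the path to the root. The difficulty is that a Galton--Watson tree with heavy-tailed jumps can build large displacements either by one big jump or by accumulating many steps. We will show $\P(M>n)\le Cn^{d-\alpha}$ by a one-big-jump argument. For each node, the expected number of nodes at generation $g$ is $m^g$ with $m<1$, so the expected total number of nodes is $1/(1-m)$. A union bound over nodes then gives
\[
\P(\text{some node has range}\ge n/2)\le \tfrac{1}{1-m}\,C\beta (n/2)^{d-\alpha}.
\]
The remaining event is that all jumps are below $n/2$ but their sum along some path exceeds $n$. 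We would handle it by truncation and an exponential-moment bound: with jumps capped at $n/2$, the sum along a path of length $g$ has good moments. Combined with the exponential decay $m^g$ of generation sizes, a Fuk--Nagaev-type estimate, iterated on the scales $n/2^j$, should yield a contribution $O(n^{d-\alpha})$, or of the form $\sum_j$ of (number of nodes) $\times$ (probability that two jumps of size $\ge n/4$ occur), which is of smaller order. Care is needed because the offspring count is correlated with the jump size, since a node with large range has many children, of order $k^d$. This correlation is what forces $\alpha>2d$ rather than $\alpha>d$, so we will control it through size-biased moments such as $\sum_k\lambda_k k^{d}\cdot k^{s}$ for suitable $s<\alpha-2d$.
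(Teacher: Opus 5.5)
Your Steps 1--2 reproduce the paper's construction. The paper's modified update function $\Phi$, built from the nested endpoints $p_r^\pm(\sigma)$, is exactly a range decomposition with configuration-independent weights $\lambda_0=\gamma_0$, $\lambda_k=\gamma_k-\gamma_{k-1}$, and $\P(K>r)=1-\gamma_r=\tanh\bigl(\beta\sum_{|x|>r}|J(x)|\bigr)$. The exploration is then dominated by a Galton--Watson tree with offspring $|B^*_K|$, whose mean is finite iff $\sum_x|x|^d|J(x)|<\infty$. One correction: differences of truncations ``extended by $+$'' need not be nonnegative. You need infima over exterior configurations taken separately for each sign, as with $p_r^\pm$.

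The genuine gap is Step 3, which carries all the quantitative content of the theorem. Your union bound disposes of a single range $\ge n/2$. The complementary event (all ranges $<n/2$, some path sum $>n$) is only announced: truncation, Fuk--Nagaev ``iterated on scales'', size-biased moments. None of this is carried out. In particular, the correlation you rightly flag (a node of range $k$ has $\asymp k^d$ children) is never controlled. As written, the proposal proves finitariness but gives no tail bound.

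The paper takes a different route. It compares $|B_R|$ with the total progeny $T$ and encodes $T$ as the first passage below $0$ of the \L ukasiewicz walk with steps $\xi-1$. It dominates $\pi$ by $\hat\pi$ with $\hat\pi([n,\infty))=C'\beta n^{1-\alpha/d}$ and applies Borovkov's theorem, $\P(\hat T>n)\sim\E[\hat T]\,\hat\pi([1-m_1n,\infty))$. This gives $\P(|B_R|>n)\le Cn^{1-\alpha/d}$, i.e.\ $\P(R>n)\le Cn^{d-\alpha}$. The comparison $|B_R|\preceq T$ is pathwise in $d=1$. In $d\ge2$, however, a chain of radius-one balls has $|B_R|\asymp T^d$, so your formulation via the maximal displacement $M$ is the more careful one; it just has to be closed.

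It closes with one recursive inequality, no Fuk--Nagaev needed. In the dominating tree, $M\le K_\varnothing+\max_c M_c$. Given the root range $K_\varnothing$, the $|B^*_{K_\varnothing}|$ children carry i.i.d.\ copies $M_c$ of $M$. A conditional union bound gives, for $\epsilon\in(0,1)$ and $\epsilon t\ge 1$,
\[
\P(M>t)\le \P(K>\epsilon t)+\E\bigl[|B^*_K|\,\mathbf 1_{\{K\le\epsilon t\}}\bigr]\,\P\bigl(M>(1-\epsilon)t\bigr)\le C\beta\,\epsilon^{d-\alpha}t^{d-\alpha}+m\,\P\bigl(M>(1-\epsilon)t\bigr),
\]
where $m=\E|B^*_K|<1$. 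Choose $\epsilon$ so that $q:=m(1-\epsilon)^{d-\alpha}<1$ and induct over the scales $(1-\epsilon)^{-j}$. This yields $\P(M>t)\le At^{d-\alpha}$ for any $A\ge C\beta\epsilon^{d-\alpha}/(1-q)$, with $A$ also large enough to cover bounded $t$. The correlation between range and offspring count disappears because the offspring count enters only through its mean $m$, which is finite exactly when $\alpha>2d$.
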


\begin{remark}
    Let us make a few comments about this result.
    \begin{itemize}
        \item This statement implies that the coding volume $|B_R|$ satisfies
        \begin{equation*}
            \forall n \geq1, \quad \P(|B_R| >n) \leq \hat Cn^{1-\alpha/d}
        \end{equation*}
        for some constant $\hat C:=\hat C(\alpha,\beta,d) \in (0,\infty)$.
        In particular, since $\alpha>2d$, our coding of $\mu_{\beta, J}$ has finite expected coding volume.
        \item The assumption $\alpha>2d$ is necessary, not only to get a finite expected coding volume; in particular, we cannot obtain a finitary coding with our method in the case $\alpha \in (d,2d]$.
        \item If we replace assumption \eqref{eq:form_J} by $|J(x)| \leq |x|^{-\alpha} L(|x|)$ for some slowly varying function $L$ (see Section \ref{sec:prelim} for the definition), we obtain the same result with the function $L$ appearing on the RHS, that is
        $$\forall n \geq1, \quad \P(R >n) \leq Cn^{d-\alpha} L(n)$$
        for some constant $C>0$. Interestingly, this allows to get the result for $\alpha=2d$ and e.g. $L(n)=\ln(n)^{-(1+\varepsilon)}$ for some $\varepsilon>0$.
        We refer to Remark \ref{rem:hyp_J} at the end of the paper for more details.
        \item For concreteness, we choose to consider the model on $\Z^d$, but nothing in the proof strongly relies on the structure of the lattice. One could do the same proof on any infinite transitive graph and obtain an analogous result.
    \end{itemize}
\end{remark}

As already mentioned in the previous section, the first results regarding finitary codings for the Ising model were obtained by van den Berg and Steif \cite{VdB_steif}.
They showed that when several infinite-volume Gibbs measures coincide, finitary codings cannot exist.
They also proved that when the interactions are finite-range and ferromagnetic (i.e. $J$ has finite support and $J\geq0$), the subcritical Ising model is an FFIID with exponential tails.
At the same time, Häggström and Steif \cite{HaggstromSteif} proved the same result in the finite-range case at very high temperature (i.e. $\beta$ small enough so that a ``high-noise'' condition is fulfilled), but without assuming $J$ positive.
The methods of these two papers cannot apply when the interactions have infinite range.

For the long-range Ising model, Galves, L\"ocherbach, and Orlandi \cite{GalvesLocherbachOrlandi} proved the existence of a finitary coding at very high temperature (with a similar ``high-noise'' condition to \cite{HaggstromSteif}),
not necessarily in the ferromagnetic case, with a stronger condition than \eqref{eq:summability} on $J$; they consider interactions of more than two spins, but for pairwise interactions, their condition is equivalent to our $\alpha>2d$ (see Remark 2 in that paper, and compare it with our \eqref{eq:integrability_condition}).
In fact, their goal was to construct a \emph{perfect sampling algorithm} rather than a factor map, so they were mostly interested in controlling the stopping time of the algorithm. Thus, we cannot deduce quantitative information on the tail of the coding radius from their result.
Then, Harel and Spinka have constructed in \cite{HarelSpinka} a finitary coding in the entire regime of uniqueness of the Gibbs measure, in the ferromagnetic case. Their techniques rely on monotonicity, but are not restricted to a perturbative regime of temperature.
Their coding satisfies the following inequality:
$$\P(R>n) \leq d_{\operatorname{TV}}(\mu^+_{\beta, J, B_n} (\sigma_0 \in \cdot);\mu^-_{\beta, J, B_n} (\sigma_0 \in \cdot))=\langle \sigma_0\rangle^+_{\beta,J,B_n},$$
where $d_{\operatorname{TV}}$ denotes the total variation distance (the inequality is Theorem 7 in \cite{HarelSpinka} and the equality is a simple computation).
The tail of the coding radius is thus upper bounded by the so-called \emph{magnetisation} of the model. However, it is unclear how this function decays as $n$ goes to $\infty$.
The GKS inequalities easily imply that $\langle \sigma_0\rangle^+_{\beta,J,B_n}$ is lower bounded by $c \beta n^{d-\alpha}$ for some $c>0$ if $J(x) \sim |x|^{-\alpha}$, but we are unable to prove an upper bound of the same order.
\medskip

Theorem \ref{thm:main} therefore provides, to the best of our knowledge, the first quantitative polynomial upper bound on the coding radius tails for the long-range Ising model.
We further notice that a coding for the long-range Ising model cannot have faster-than-polynomial tails.
We have already observed in Section \ref{sec:factormaps} that the tail of the coding radius is lower bounded by the two-point function, see \eqref{eq:two_point_fct_and_radius}. The fact that the two-point function cannot decay faster than the potential (i.e. faster than $J$) goes back to Iagolnitzer and Souillard \cite{IagolnitzerSouillard}.
Newman and Spohn further proved in an unpublished manuscript that $\langle\sigma_0 \sigma_x\rangle_{\beta,J} \sim c(\beta)J(x)$ as $|x|\to \infty$ (see \cite{Aoun} for a published paper, which extends the result to the random-cluster and Potts models). These two results hold only in the ferromagnetic case. Thus, say for $J(x) =|x|^{-\alpha}$ and $\beta$ small enough, we have the following chain of inequalities:
$$cn^{-\alpha} \leq \P(R>n) \leq Cn^{d- \alpha}$$
for some constants $c,C \in (0,\infty)$.

This answers a question recently raised by Chazottes, Gallo, and Takahashi \cite[Question 2]{chazottes2026finitarycodinggaussianconcentration}.
They were looking for examples of Gibbs measures which are FFIID with polynomial tails, still with integrable (or square-integrable) coding volume.\footnote{Here, in the sense that any coding of the model must have at least polynomial tails (otherwise, one has plenty examples of finitary codings with exponential tails, thus with finite moments for the coding volume).} The very high temperature long-range Ising models with decay exponent $\alpha>2d$ therefore provide a class of examples.
Furthermore, one can deduce from their work and from Theorem \ref{thm:main} that in this case, $\mu_{\beta,J}$ satisfies Gaussian concentration (apply \cite[Theorem 3.3]{chazottes2026finitarycodinggaussianconcentration} to the coding provided by our Theorem \ref{thm:main}, noticing that it satisfies the short-range factorisation property \cite[Definition 3.2]{chazottes2026finitarycodinggaussianconcentration} with constant $1/2$).

\section{Construction of the finitary coding}
\subsection{Preliminaries: balls of $\Z^d$, regularly varying functions and stochastic domination.}\label{sec:prelim}
We denote by $\N=\{0,1,2,\dots\}$ the set of non-negative integers.
For all $r\in \N$, let 
$$B_r:=\{x \in \Z^d, |x|\leq r\}$$
be the ball of radius $r$ centered at the origin in the $d$-dimensional lattice ($|x|$ denotes the $1$-norm of $x\in \Z^d$).
Let $\partial B_{r+1}:= B_{r+1} \setminus B_r$ be the set of sites in $\Z^d$ at distance exactly $r+1$ from the origin, and let $B_r^* := B_r \setminus \{0\}$ be the ball of radius $r$ excluding the origin.
We shall use the following basic estimates on the asymptotic size of balls and spheres in $\Z^d$: there are constants $v_d$ and $\kappa_d$ in $(0,\infty)$ such that
\begin{equation}\label{eq:asymptotic_size}
    |B_r| \sim |B_r^*| \sim v_d r^d \quad \et \quad |\partial B_r| \sim \kappa_d r^{d-1} \quad \text{as }r\to \infty.
\end{equation}

We then recall some basic definitions of the theory of \emph{regularly varying functions}.
A (Lebesgue measurable) function $L: \R \to (0, \infty)$ is said to be slowly varying (at $+\infty$) if
$$\forall a>0, \quad \frac{L(at)}{L(t)} \underset{t\to +\infty}{\longrightarrow}1.$$
Trivial examples of slowly varying functions are the constant functions.
A (Lebesgue measurable) function $F:\R \to (0,\infty)$ is said to be regularly varying (at $+\infty$) with index $\rho \in \R$ if
$$\forall a>0, \quad \frac{F(at)}{F(t)} \underset{t\to +\infty}{\longrightarrow}a^\rho$$
or, equivalently, if
$$\forall t, \quad F(t)=t^\rho L(t)$$
for some slowly varying function $L$.
In particular, a probability measure $\mu$ on $\N$ is said to have \emph{regularly varying tails} with index $\rho$ if
$$\forall n \geq1, \quad \mu([n,\infty))=n^\rho L(n)$$
for some slowly varying function $L$.
The final argument of the proof makes use of a theorem on random walks whose jump distribution has regularly varying tails.

Finally, for any two probability measures $\mu$ and $\mu'$ on $\N$, we say that $\mu$ is \emph{stochastically dominated} by $\mu'$ if
for every non-decreasing bounded function $f:\N \to \R$, one has $\int f \d \mu \leq \int f \d \mu'$.
It is well-known that this is equivalent to
$$\forall n \geq 1, \quad\mu([n, \infty)) \leq \mu'([n,\infty)),$$
and also to the existence of a coupling $(X,X')$ of $\mu$ and $\mu'$ such that $X \leq X'$ almost surely.

\subsection{Glauber dynamics and coupling-from-the-past}
As with all the previous constructions of finitary codings mentioned above, ours is based on the coupling-from-the-past algorithm, invented by Propp and Wilson in their seminal work \cite{propp_wilson}.
The idea is to couple Markov processes whose invariant law is the targeted measure, initialised from any possible initial configuration at a large negative time. If all the processes coincide at a certain site at time 0, then the spin at this site has the correct distribution.

The Glauber dynamics, originally introduced in \cite{Glauber} by Roy J. Glauber, is a continuous-time Markov process on $\Omega$ whose invariant distribution is $\mu_{\beta, J}$.
The graphical representation of the dynamics, that we describe below and which goes back to Harris \cite{Harris}, provides a coupling of processes starting from any time and any initial configuration, which is well-suited for implementing coupling-from-the-past constructions.
The idea is to attach i.i.d. ``exponential clocks'' to every vertex of $\Z^d$ and to update the spin at a vertex when its clock rings, according to the conditional distribution given the rest of the configuration.
This procedure, also called the heat-bath algorithm, clearly leaves invariant $\mu_{\beta, J}$.

Let $\PP$ be a Poisson point process (PPP) of intensity $\# \otimes \d t \otimes \d u$, where $\#$, $\d t$, $\d u$ denote the counting measure on $\Z^d$, the Lebesgue measure on $\R$ and the Lebesgue measure on $[0,1]$, respectively. Points of $\PP$ are called the \emph{update marks} of the dynamics.
The law of $\PP$ is a product measure over $\Z^d$; this is the product measure from which we will construct a factor map to $\mu_{\beta, J}$.
For $(i,t,u) \in \PP$, the three coordinates of the update mark should be interpreted as follows: $i$ is the site carrying the spin that is updated, $t$ is the time at which the update occurs and $u$ is an extra-randomness from which we sample the new spin at $i$.
Define the \emph{update function} $\varphi:\R \times[0,1] \to \{\pm1\}$ by
\begin{equation}\label{eq:phi_Ising}
    \varphi(s,u) =\begin{cases}
        -1& \si u\leq (1+e^{2\beta s})^{-1} \\
        +1& \otw \end{cases}.
\end{equation}
Note for future purpose that $\varphi$ is non-decreasing in both variables.
For every update mark $(i,t,u)\in \PP$, if the process is in configuration $\sigma$ before the update, then the new value of the spin at $i$ is given by $\varphi(\sum_{j \ne i} J_{ij} \sigma(j), u)$; the other spins remain unchanged.
For any $\tau \in \R$ and $\omega \in \Omega$, denote by $(\sigma^{\omega,\tau}_t)_{t \geq \tau}$ the process starting at time $\tau$ with initial configuration $\omega$, and whose evolution is governed by the update marks $\PP$.
It means that we have $\sigma^{\omega,\tau}_\tau=\omega$ and for every $(i,t,u) \in \PP$ with $t> \tau$,
\begin{equation}\label{eq:coupled_processes}
    \forall j\neq i, \sigma^{\omega,\tau}_t(j)=\sigma^{\omega,\tau}_{t-}(j) \et \sigma^{\omega,\tau}_t(i)=\varphi\left(\sum_{j \neq i} J_{ij}\sigma^{\omega,\tau}_{t-}(j), u\right).
\end{equation}

Now, let us see how coupling-from-the-past arguments enable one to obtain an exact sampling of $\mu_{\beta, J}$ as a factor of $\PP$.
For every $i \in \Z^d$, let 
$$\tau_i=\sup\{\tau \leq0 \mid \forall \omega, \omega' \in \Omega, \ \sigma_0^{\omega, \tau}(i)=\sigma^{\omega',\tau}_0(i)\},$$
that is, $\tau_i$ is the closest to 0 negative time such that for processes starting at time $\tau_i$, the spin at site $i$ at time $0$ does not depend on the initial configuration.
Observe that $\tau_i$ is a random time, measurable with respect to $\PP$.
We can then define a random field $\sigma^*=(\sigma^*(i))_{i \in \Z^d}$ as a measurable function of $\PP$ (so an FIID), by
$$\forall i \in \Z^d, \quad\sigma^*(i):=\sigma_0^{+, \tau_i}(i)$$
(here we arbitrarily put $\omega \equiv+1$ as initial configuration, as by definition of $\tau_i$, it does not influence the value of $\sigma^*(i)$).
Then,\textbf{ assuming that the $\tau_i$ are a.s. finite}, the fact that $\sigma^*$ is distributed according to $\mu_{\beta, J}$ is a classical infinite-volume adaptation of the coupling-from-the-past argument of \cite{propp_wilson}, which already appears in many references cited above (see \cite[Lemma 3.6]{VdB_steif}, \cite[Theorem 2.2]{HaggstromSteif}, \cite[Proposition 2.3]{Spinka_Ising}, \cite[Proposition 2.1]{Faipeur}).
This therefore produces a coding for $\mu_{\beta, J}$.
It remains to prove that the $\tau_i$ are a.s. finite and to control the tail of the coding radius.

In Section \ref{sec:Backward exploration}, we outline a proof, in the same spirit as the one in \cite{HaggstromSteif}, which shows that finite-range Ising models (i.e. when $J$ has a finite support) are FFIID with exponential tails for $\beta$ sufficiently small. We then adapt this proof to our long-range case in Section \ref{sec:Proof of Theorem}, proving Theorem \ref{thm:main}.

\subsection{Backward exploration}\label{sec:Backward exploration}
In this section, we explain the strategy to prove that the $\tau_i$ are a.s. finite for finite-range models.
Since they all have the same law (while not being independent), it is enough to prove that $\tau_0$ is a.s. finite.

We explore the dynamics backwards, starting from the space-time position $(0,0)$. We go back to the last update of site $0$ before time $0$, that is, we consider the update mark $$(0,t_0,u_0) \in \PP \text{ with }t_0=\sup\{t<0 \mid \exists u_0 \in[0,1], (0,t,u_0)\in \PP\}$$ ($t_0$ is a.s. finite by properties of the PPP).
Let $S_0:= \sum_{x \neq 0} |J(x)|$ (which is finite by assumption); when we apply the update function in the dynamics, the first variable takes values between $-S_0$ and $S_0$.
By monotonicity of the update function, it implies that if $u_0 \leq (1+e^{2\beta S_0})^{-1}$, then the new spin at site $0$ produced by the update mark is $-1$ whatever the configuration of the process at time $t_0-$; in particular, since the spin at $0$ is no longer changed up to time $0$, we have $\sigma^{\omega, \tau}_0(0)=-1$ for any $\tau<t_0$ and any initial configuration $\omega$; a similar situation happens if $u_0> (1+e^{-2\beta S_0})^{-1}$, where this time we know that there is a spin $+1$ at site $0$.
When $u_0$ is between $(1+e^{2\beta S_0})^{-1}$ and $(1+e^{-2\beta S_0})^{-1}$, the value of the spin produced by the update does depend on the configuration at time $t_0-$.

In the finite-range case, say for the nearest-neighbour Ising model, we do not need to reveal the entire configuration but only the neighbours of $0$. Thus, we pursue the exploration from the space-time positions corresponding to time $t_0$ and neighbouring sites of the origin. If for all of them, the next update mark we find has a third coordinate $u$ which is not in the uncertainty interval $\big](1+e^{2\beta S_0})^{-1},(1+e^{-2\beta S_0})^{-1}\big]$, then we can stop the exploration; indeed, in this case, all the spins of the neighbours at time $t_0-$ are determined independently of the initial configuration, so is the spin at $0$ after the update of time $t_0$, even though $u_0$ was in the uncertainty interval. Otherwise, we continue again the exploration etc.

\begin{figure}[h]
\centering
\begin{tikzpicture}
  \def\ymax{5.4}

  \foreach \x in {-3,-1,1,3,5} {
    \draw[gray!60, thin] (\x,0) -- (\x,\ymax);
  }

  \draw[->, thick] (-4.6,0) -- (6.2,0) node[right] {$\Z^d$};
  \draw[thick] (-1,0) -- (-1,-0.1);
  \node[below] at (-1,-0.1) {$0$};

  \draw[<-, thick] (-4.5,0) -- (-4.5,\ymax); 
  \node[left] at (-4.6,5) {time};
  \node[left] at (-4.6,0) {$t=0$};
  \draw[thick] (-4.58,1.5) -- (-4.42,1.5);
  \node[left] at (-4.6,1.5) {$t_0$};

  \draw[red, thick] (-1,0) -- (-1,1.5) -- (-3,1.5) -- (-3,2.1);
  \draw[red, thick] (-1,1.5) -- (1,1.5) -- (1,2.7) -- (-1,2.7) -- (-1,4) -- (-3,4) -- (-3, \ymax);
  \draw[red, thick] (-1,4) -- (1,4) -- (1,5);
  \draw[red, thick] (1,2.7) -- (3,2.7) -- (3,3.4) -- (1,3.4) -- (1,4);
  \draw[red, thick] (3,3.4) -- (5,3.4) -- (5, 4.4);
  
  \draw[red, very thick] (-1.1,1.4) -- (-0.9,1.6);\draw[red, very thick] (-1.1,1.6) -- (-0.9,1.4);
  \draw[red, very thick] (0.9,2.6) -- (1.1,2.8);\draw[red, very thick] (0.9,2.8) -- (1.1,2.6);
  \draw[red, very thick] (-1.1,3.9) -- (-0.9,4.1);\draw[red, very thick] (-1.1,4.1) -- (-0.9,3.9);
  \draw[red, very thick] (2.9,1.7) -- (3.1,1.9);\draw[red, very thick] (2.9,1.9) -- (3.1,1.7);
  \draw[red, very thick] (2.9,3.3) -- (3.1,3.5);\draw[red, very thick] (2.9,3.5) -- (3.1,3.3);
  \draw[red, very thick] (4.9,0.4) -- (5.1,0.6);\draw[red, very thick] (4.9,0.6) -- (5.1,0.4);
  
  \fill[green!55!black] (-3.1,2) rectangle (-2.9,2.2);
  \fill[green!55!black] (-3.1,3) rectangle (-2.9,3.2);
  \fill[green!55!black] (0.9,4.9) rectangle (1.1,5.1);
  \fill[green!55!black] (4.9,2.3) rectangle (5.1,2.5);
  \fill[green!55!black] (0.9,0.9) rectangle (1.1,1.1);
  \fill[green!55!black] (-1.1,4.6) rectangle (-0.9,4.8);
  \fill[green!55!black] (4.9,4.3) rectangle (5.1,4.5);
\end{tikzpicture}

\begin{center}
\renewcommand{\arraystretch}{1.4}
\begin{tabular}{cl}
  \tikz{\draw[red, very thick] (-0.1,-0.1) -- (0.1,0.1);
        \draw[red, very thick] (-0.1,0.1) -- (0.1,-0.1);}
  & update mark with $u \in \big](1+e^{2\beta S_0})^{-1},\,(1+e^{-2\beta S_0})^{-1}\big]$ \\
  \tikz{\fill[green!55!black] (-0.1,-0.1) rectangle (0.1,0.1);}
  & update mark with $u \le (1+e^{2\beta S_0})^{-1}$ or $u > (1+e^{-2\beta S_0})^{-1}$ \\
\end{tabular}
\end{center}
\caption{Illustration of a backward exploration of the dynamics (in dimension $d=1$). Note that some update marks are not revealed by the exploration, and also that two exploration paths (in red) can merge, so that the resulting graph is not necessarily a tree.}
\end{figure}

Let 
\begin{equation*}
    \gamma(\beta):=(1+e^{2\beta S_0})^{-1}+1-(1+e^{-2\beta S_0})^{-1}=1-\tanh(\beta S_0)
\end{equation*}
be the proportion of update marks that allow us to stop the exploration; this quantity corresponds to the multigamma admissibility of \cite{HaggstromSteif}.
The backwards exploration is dominated by a Galton--Watson process of reproduction law
$$\pi=\gamma(\beta) \delta_0 + (1-\gamma(\beta)) \delta_{2d}$$
(the ``$2d$'', i.e. the number of neighbours of $0$, being replaced by the cardinal of the support of $J$ for other finite-range cases).
More precisely, the backwards exploration stops if the Galton--Watson process dies out, and the number of update marks revealed during the exploration is less than the total population.
If the expectation of $\pi$ is less than $1$, which is the case when $\beta$ is sufficiently small, the Galton--Watson process is subcritical: it dies out almost surely and the total population has exponential tails.
In this case, it immediately implies that $\tau_0$ is a.s. finite and that the coding volume $|B_R|$ has exponential tails.
The fact that the expectation of $\pi$ is less than 1 is equivalent to $\gamma(\beta) >1-1/(2d)$, which is precisely the high-noise condition of \cite{HaggstromSteif}.

If one tries to apply this method for the long-range case, one ends up with a Galton--Watson process of reproduction law
$$``\pi=\gamma(\beta) \delta_0 + (1-\gamma(\beta)) \delta_{\infty} ".$$
Our strategy, implemented in the next section, will be to reveal the configuration only in a finite ball around the updated site, the smallest that enables one to determine the value of the new spin.

\subsection{Proof of Theorem \ref{thm:main}}\label{sec:Proof of Theorem}

\subsubsection{Glauber dynamics with a new update function}
For all $r \in \N=\{0,1,2,\dots\}$ and $\sigma \in \Omega$, let 
$$p^-_r(\sigma)=\left(1+e^{2 \beta \sum_{|x|\leq r} J(x) \sigma(x) +2 \beta \sum_{|x|> r} |J(x)|}\right)^{-1}$$
and 
$$p^+_r(\sigma)=\left(1+e^{-2 \beta\sum_{|x|\leq r} J(x) \sigma(x) +2 \beta \sum_{|x|> r} |J(x)|}\right)^{-1};$$
$p_r^-(\sigma)$ (resp. $p_r^+(\sigma)$) corresponds to the minimal probability under $\mu_{\beta, J}$ for the spin at the origin to equal $-1$ (resp. $+1$) given that the rest of the configuration coincide with $\sigma$ on $B^*_r$ (in both cases, the minimum is taken over configurations $\sigma'$ that agrees with $\sigma$ on $B^*_r$; it is attained in the first case by taking $\sigma'(x)= \frac{J(x)}{|J(x)|}$ for all $x\notin B_r$, and the opposite in the second case).
Note that $p_0^-(\sigma)$ and $p_0^+(\sigma)$ do not depend on $\sigma$, and in fact correspond to the $(1+e^{2 \beta S_0})^{-1}$ and $(1+e^{-2 \beta S_0})^{-1}$ of the previous section.
The two sequences $(p^-_r(\sigma))_{r \in \N}$ and $(p^+_r(\sigma))_{r \in \N}$ are non-decreasing and converge respectively to $(1+e^{2\beta S_0(\sigma)})^{-1}$ and $(1+e^{-2\beta S_0(\sigma)})^{-1}$ (where $S_0(\sigma)=\sum_{x \neq 0} J(x) \sigma(x)$), that is, the conditional probability that the spin at the origin is $-1$ (resp. $+1$) given that the rest of the configuration coincide with $\sigma$.

Then, let $$\gamma_r(\sigma):=p_r^-(\sigma)+p_r^+(\sigma) \quad \et \quad\gamma_r:=\inf_{\sigma \in \Omega} \gamma_r(\sigma)$$ (in particular $\gamma_0$ is the multigamma admissibility).
Now, consider any update mark $(i,t,u) \in \PP$; if one uses the update function $\varphi$ of \eqref{eq:phi_Ising} and reveals the configuration at time $t-$ (denoted here $\sigma_{t-}$, without specifying any initial time and configuration) on $i+B_r^*$, and if moreover $u \leq p_r^-(\sigma_{t-})$ (resp. $u>1-p_r^+(\sigma_{t-})$), then this is sufficient to determine that the new spin at $i$ will be $-1$ (resp. $+1$);
it means that with probability (at least) $\gamma_r$, one is able to determine the new spin produced by the update mark without looking at the current configuration outside $i+B_r^*$.

Therefore, we would like to say that when $u$ belongs to some subset of $[0,1]$ of Lebesgue-measure $\gamma_r$, then, whatever the current configuration, the new spin at $i$ can be computed only by revealing the configuration on $i+B^*_r$.
However, with $\varphi$ as update function, this will be true only when $$\ds u \in \bigcap_{\sigma \in \Omega} \left(p_r^-(\sigma) ;p_r^+(\sigma)\right]^c, $$ which is of lower measure: in fact, $\gamma_r > \inf_{\sigma \in \Omega} p_r^-(\sigma) + \inf_{\sigma \in \Omega} p_r^+(\sigma)$.
Therefore, we modify our update function to obtain the targeted property.
It still has two variables, but the first one is now a configuration and not just a real number representing $\sum J(x) \sigma(x)$.
The new update function $\Phi: \Omega \times [0,1] \to \{\pm 1\}$ is defined by
\begin{equation}\label{eq:phi_Ising_long_range}
    \Phi(\sigma,u) = \begin{cases}-1&\ds \si u \in [0,p_0^-] \cup \bigcup_{r\geq0} \left(\gamma_r(\sigma) , p^+_r(\sigma) + p_{r+1}^-(\sigma) \right]\\
    +1&\ds \si u \in (p_0^-,\gamma_0] \cup \bigcup_{r\geq0} \left(p^+_r(\sigma) + p_{r+1}^-(\sigma)  , \gamma_{r+1}(\sigma)\right]\end{cases}.
\end{equation}
This is well defined since $(p^-_r(\sigma))_{r\geq0}$ and $(p^+_r(\sigma))_{r\geq0}$ are non-decreasing and $\gamma_r(\sigma) \xrightarrow[r \to \infty]{}1$.
What has been done in \eqref{eq:phi_Ising_long_range} is partitioning $[0,1]$ into sub-intervals of length $p_0^-$, $p_0^+$, $p_1^-(\sigma) - p_0^-$, $p_1^+(\sigma)-p_0^+$, $p_2^-(\sigma)-p_1^-(\sigma)$ etc. and alternate the sign to determine the output of $\Phi(\sigma, u)$.
Hence, $\Phi(\sigma, u)$ equals $-1$ (resp. $+1$) for values of $u$ belonging to a subset of $[0,1]$ of Lebesgue measure $\lim_{r \to \infty} p_r^-(\sigma)$ (resp. $\lim_{r \to \infty} p_r^+(\sigma)$).
This construction guarantees two crucial properties for the update function:
\begin{itemize}
    \item if $U$ is a uniform random variable on $[0,1]$, then $\Phi(\sigma, U)$ is distributed according to the conditional law of the spin at the origin given that the rest of the configuration coincides with $\sigma$;
    \item if $u \leq \gamma_r$, since $p^-_r(\sigma)$, $p_r^+(\sigma)$ and $\gamma_r(\sigma)$ depend on $\sigma$ only through its restriction to $B_r^*$, one can determine $\Phi(\sigma,u)$ by revealing $\sigma$ only on $B_r^*$.
\end{itemize}

The updates in the coupled processes $(\sigma_t^{\omega, \tau})_{t \geq \tau}$, for all $\tau \in \R$ and $\omega \in \Omega$, defined in \eqref{eq:coupled_processes}, are now performed with this new update function $\Phi$. During an update of site $i$, the current configuration is viewed with $i$ as origin, i.e. we apply $\Phi$ to a translated-version of the configuration.
More precisely, for all $(i,t,u) \in \PP$ with $t\geq \tau$, we have
$$\forall j \neq i, \, \sigma^{\omega, \tau}_t(j)=\sigma^{\omega, \tau}_{t-}(j) \et \sigma^{\omega, \tau}_t(j)=\Phi(\theta_i\cdot \sigma^{\omega, \tau}_{t-},u)$$
where for all $\sigma \in \Omega$, $(\theta_i\cdot\sigma)(x):=\sigma(i+x)$.

\subsubsection{Backward exploration dominated by a Galton--Watson process.}
We explore the dynamics backwards as in Section \ref{sec:Backward exploration}.
We consider as before the last update of site $0$ before time $0$, generated by the update mark $(0,t_0,u_0) \in \PP$.
Let $r$ be the smallest integer such that $u_0 \leq \gamma_r$. Note that $r$ is random, but it is entirely determined by $u_0$.
The new value of the spin produced by this update mark --- which is the value of the spin at site $0$ and time $0$ in this realisation of the dynamics --- is therefore a measurable function of $u_0$ and of the restriction of the configuration at time $t_0-$ to the ball $B_r^*$.
We then continue the exploration starting from all the sites of this random ball. When $r=0$, the exploration ends.

Then, similarly to the finite-range case, it follows that the number of update marks discovered during the backward exploration is dominated by the total population of a Galton--Watson process of reproduction law
\begin{equation*}
    \pi:= \gamma_0 \delta_0 + \sum_{r \geq0} (\gamma_{r+1}-\gamma_r) \delta_{|B_{r+1}^*|}.
\end{equation*}
For $\xi$ a random variable of law $\pi$, one has
$$ \E[\xi]= \sum_{r \geq 0} (1-\gamma_r) |\partial B_{r+1}|.$$
If this expectation is less than 1, the Galton--Watson process dies out almost surely, and it yields a coding of $\mu_{\beta, J}$.
We prove the following identity, which allows us to estimate the decay of $1-\gamma_r$:
\begin{lemma}\label{lemma:tail_gamma_r}
    For all $r \geq 0$, one has
    \begin{equation*}
        1-\gamma_r =\tanh\left( \beta \sum_{|x|>r} |J(x)|\right).
    \end{equation*}
\end{lemma}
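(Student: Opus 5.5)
The plan is to compute $\gamma_r(\sigma)$ explicitly in terms of two real parameters, maximise $1-\gamma_r(\sigma)$ over one of them, and then exhibit a configuration at which the maximum is attained. This last step is what upgrades the easy inequality to the stated equality.

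\emph{Step 1: explicit formula.} Fix $r\ge 0$ and set
\[
b:=\beta\sum_{|x|>r}|J(x)|\ \ge 0, \qquad a(\sigma):=\beta\sum_{x\in B_r^*}J(x)\sigma(x).
\]
The only identity needed is $(1+e^{2y})^{-1}=\tfrac12(1-\tanh y)$. With it, the definitions of $p_r^\pm(\sigma)$ give
\[
p_r^-(\sigma)=\tfrac12\bigl(1-\tanh(b+a)\bigr), \qquad p_r^+(\sigma)=\tfrac12\bigl(1-\tanh(b-a)\bigr).
\]
Hence
\[
1-\gamma_r(\sigma)=\tfrac12\bigl(\tanh(b+a(\sigma))+\tanh(b-a(\sigma))\bigr)=:g\bigl(a(\sigma)\bigr).
\]
Since $\gamma_r=\inf_\sigma\gamma_r(\sigma)$, we have $1-\gamma_r=\sup_\sigma g(a(\sigma))$.

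\emph{Step 2: maximise $g$.} The function $g$ is even in $a$. For $a\ge 0$,
\[
g'(a)=\tfrac12\bigl(\cosh^{-2}(b+a)-\cosh^{-2}(b-a)\bigr).
\]
When $b\ge 0$ and $a\ge 0$ we have $|b+a|\ge|b-a|$. Because $\cosh^{-2}$ is decreasing in $|\cdot|$, this gives $g'(a)\le 0$. So $g(a)\le g(0)=\tanh b$ for every real $a$, and therefore $1-\gamma_r\le\tanh b$.

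\emph{Step 3: attain $a=0$.} This is the step that needs care, because in general the values $J(x)$ cannot be balanced arbitrarily. It is resolved by the evenness of $J$. The set $B_r^*$ is symmetric under $x\mapsto -x$, and this map has no fixed point there since $0\notin B_r^*$. So $B_r^*$ splits into disjoint pairs $\{x,-x\}$. Choose $\sigma$ with $\sigma(-x)=-\sigma(x)$ for each pair; for instance, fix one representative per pair and give it spin $+1$. Then
\[
J(x)\sigma(x)+J(-x)\sigma(-x)=J(x)\bigl(\sigma(x)-\sigma(x)\bigr)=0,
\]
so $a(\sigma)=0$. For this $\sigma$ we get $1-\gamma_r(\sigma)=\tanh b$.

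Combining Steps 2 and 3, the supremum equals $\tanh b$, which is exactly $1-\gamma_r=\tanh\bigl(\beta\sum_{|x|>r}|J(x)|\bigr)$. The case $r=0$ is consistent with this: $B_0^*=\emptyset$ and $a\equiv 0$, which recovers $\gamma_0=1-\tanh(\beta S_0)$ from Section~\ref{sec:Backward exploration}.
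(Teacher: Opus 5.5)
Your proof is correct and follows the paper's route. You compute $1-\gamma_r(\sigma)$ in closed form as a function of $a(\sigma)$ and $b$, show it is maximised at $a=0$, and realise $a=0$ through an antisymmetric configuration $\sigma(-x)=-\sigma(x)$ using the evenness of $J$. The paper instead writes $1-\gamma_r(\sigma)=\sinh(2b)/(\cosh(2a)+\cosh(2b))$, so maximality at $a=0$ follows at once from $\cosh(2a)\ge 1$ without your derivative computation; this difference is cosmetic.
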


\begin{proof}
The proof is a simple computation using hyperbolic formulas.
Let us first compute $\gamma_r(\sigma)$ for any $\sigma \in \Omega$.
By definition of $p_r^-(\sigma)$ and $p_r^+(\sigma)$, summing both terms and using the fact\footnote{$(1+e^{a+b})^{-1} +(1+e^{-a+b})^{-1} = \frac{2+e^{-a+b}+e^{a+b}}{1+e^{a+b}+e^{-a+b}+e^{2b}} = \frac{e^a+a^{-a}+2e^{-b}}{e^a+e^{-a}+e^b+e^{-b}} = 1- \frac{e^b-e^{-b}}{e^a+e^{-a}+e^b+e^{-b}}.$} that for all $a,b \in \R$, $(1+e^{a+b})^{-1} +(1+e^{-a+b})^{-1} = 1- \frac{\sinh(b)}{\cosh(a)+\cosh(b)}$ gives
\begin{equation*}
    \gamma_r(\sigma) = 1-\dfrac{\sinh\left(2\beta\sum_{|x|> r} |J(x)| \right) }{\cosh\left(2\beta\sum_{|x|\leq r} J(x) \sigma(x)\right) + \cosh \left(2\beta\sum_{|x|> r} |J(x)| \right)}.
\end{equation*}
The minimum of this quantity over $\sigma \in \Omega$ is obtained when $\sum_{|x|\leq r} J(x) \sigma(x)$ is minimal, in absolute value.
Since $J(x)=J(-x)$, it is actually possible to make it null, by taking any configuration $\sigma$ satisfying $\sigma(-x)=-\sigma(x)$ for all $x \neq 0$.
Therefore, we obtain
$$\gamma_r = 1- \dfrac{\sinh\left(2\beta\sum_{|x|> r} |J(x)| \right)}{1+\cosh\left(2\beta\sum_{|x|> r} |J(x)| \right)} = 1- \tanh\left( \beta \sum_{|x|>r} |J(x)|\right)$$
using the fact\footnote{$(1+\cosh(2a))\tanh(a) = 2 \cosh(a)^2 \tanh(a) = 2 \cosh(a) \sinh(a) = \sinh(2a)$.} that for all $a\in \R$, $\sinh(2a) = (1+\cosh(2a))\tanh(a)$.
\end{proof}

Since $\tanh(t) \sim t$ as $t\to0$,
it follows that $\pi$ is integrable if and only if
$$\sum_{r \geq 0} |\partial B_{r+1}| \sum_{|x|>r} |J(x)| < \infty.$$
By Fubini--Tonelli theorem and equation \eqref{eq:asymptotic_size}, we find that this last condition is equivalent to
\begin{equation}\label{eq:integrability_condition}
    \sum_{x \neq 0} |x|^d |J(x)|<\infty,
\end{equation}
which holds when $\alpha>2d$ for $J$ satisfying \eqref{eq:form_J}.
Then, taking $\beta$ small enough ensures that $\E[\xi] <1$, so we have a coding for $\mu_{\beta, J}$.

\medskip
It remains to study the tail of the coding radius, which we denote by $R$. Recall that the coding volume $|B_R|$ is dominated by the total population of the Galton--Watson process.
Consider $(\xi_{n,i})_{n, i\in \N}$ independent random variables of law $\pi$. A realisation of the Galton--Watson process of reproduction law $\pi$ is classically defined by the following: let $Z_0=1$ and recursively for all $n \geq 0$, $Z_{n+1}=\sum_{i=0}^{Z_n-1} \xi_{n,i}$. Since $\E[\xi]<1$, one has almost surely $Z_n=0$ for $n$ large enough, and one can naturally associate a finite rooted tree with $(Z_n)_{n \geq 0}$ (the root having $\xi_{0,0}$ children, each of them having respectively $\xi_{1,0}, \xi_{1,1}, \xi_{1,2}, \dots$ children etc.). This tree has total size $T:=\sum_{n \geq 0} Z_n$, and we have
\begin{equation}\label{eq:domination}
    \forall n \in \N, \quad \P(|B_R|>n) \leq \P(T >n).
\end{equation}
Enumerate by $(\xi_k)_{1 \leq k \leq T}$ the number of children of the vertices of this tree, taken in the lexicographical order.
Let $(V_n)_{n \geq 0}$ be the associated \L ukasiewicz path, defined by $V_0=0$ and for all $1\leq n \leq T$, $V_n=V_{n-1}+\xi_n-1$. It follows that with probability 1, $V_n \geq 0$ for all $n< T$ and $V_T=-1$.
Therefore, one has $$\forall n \in \N, \quad \P(T>n)=\P(V_0, \dots, V_n \geq 0).$$
Thus, $T$ has the same distribution as the first time a random walk started at 0 with i.i.d. increments distributed as $\xi-1$ reaches $-1$.
We aim to apply \cite[Theorem 8.2.4]{Borovkov} about the tail of the hitting time of $-1$, which says in our framework that
\begin{equation*}
    \P(T >n) \underset{n \to \infty}{\sim} \E[T] \pi([1-m_1 n,\infty)) \text{ where } m_1=\E[\xi-1] <0,
\end{equation*}
but requires the increment distribution to have regularly varying tails.
Without further assumption on $J$, this is not the case of $\pi$; however, $\pi$ can be stochastically dominated by such a probability measure.
This is encapsulated in the following lemma, whose proof is postponed at the end of the paper.

\begin{lemma}\label{lemma:regular_variation}
    For $\beta$ small enough, there is a distribution $\hat \pi$ with regularly varying tails that stochastically dominates $\pi$: 
    $$\exists C'>0,\ \forall\beta<1/C',\ \forall n\geq 1, \quad\pi([n,\infty)) \leq C' \beta  n^{1-\alpha/d}=:\hat \pi ([n,\infty)).$$
\end{lemma}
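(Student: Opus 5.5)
We will bound the tail of $\pi$ directly and then check that the bound is the tail of a genuine probability measure on $\N$. By definition of $\pi$, the atom at $|B_{r+1}^*|$ carries mass $\gamma_{r+1}-\gamma_r$. So for $n\ge1$, if $r_n$ denotes the smallest $r\ge0$ with $|B^*_{r+1}|\ge n$, we get
\begin{equation*}
\pi([n,\infty))=\sum_{r\ge r_n}(\gamma_{r+1}-\gamma_r)=1-\gamma_{r_n}=\tanh\Big(\beta\sum_{|x|>r_n}|J(x)|\Big)\le \beta\sum_{|x|>r_n}|x|^{-\alpha},
\end{equation*}
using Lemma~\ref{lemma:tail_gamma_r}, the bound $\tanh(t)\le t$ and \eqref{eq:form_J}.

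Next we estimate the tail sum by summing over spheres, using \eqref{eq:asymptotic_size}:
\begin{equation*}
\sum_{|x|>r}|x|^{-\alpha}=\sum_{k>r}|\partial B_k|k^{-\alpha}\le c\sum_{k>r}k^{d-1-\alpha}\le c'(r+1)^{d-\alpha},
\end{equation*}
with constants depending only on $d$ and $\alpha$ (here $\alpha>d$ suffices).

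We then convert $r_n$ into $n$. By minimality of $r_n$, for $r_n\ge1$ we have $|B^*_{r_n}|<n$, hence $n>c''r_n^d$, i.e. $(r_n+1)\ge c'''n^{1/d}$ (for $r_n=0$ this holds trivially once $n\le |B_1^*|$ is absorbed in the constant). This gives $\pi([n,\infty))\le C'\beta\, n^{(d-\alpha)/d}=C'\beta\,n^{1-\alpha/d}$ for all $n\ge1$, with $C'$ depending only on $(d,\alpha)$.

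Finally we must check that $\hat\pi([n,\infty)):=C'\beta n^{1-\alpha/d}$ defines a probability distribution on $\N$ with regularly varying tails. We put the remaining mass $1-C'\beta$ at $0$, and choose $\beta<1/C'$ so that this mass is nonnegative and $\hat\pi([1,\infty))=C'\beta\le1$. The function $n\mapsto C'\beta n^{1-\alpha/d}$ is nonincreasing in $n$, so it is a valid tail function. Its index is $\rho=1-\alpha/d<0$, with constant slowly varying part, so $\hat\pi$ has regularly varying tails. Stochastic domination of $\pi$ by $\hat\pi$ is exactly the tail inequality $\pi([n,\infty))\le\hat\pi([n,\infty))$ for all $n\ge1$ established above.

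The only delicate point is the bookkeeping relating $r_n$ to $n$ uniformly, including small $n$. This is handled by enlarging $C'$, which the assumption $\beta<1/C'$ permits. With a slowly varying factor $L$ in \eqref{eq:form_J}, the same argument goes through via Karamata-type estimates on the tail sums.
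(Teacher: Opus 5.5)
Your argument follows the same route as the paper: telescope to $1-\gamma_{r}$, apply Lemma~\ref{lemma:tail_gamma_r} with $\tanh t\le t$, bound the tail sum by $(r+1)^{d-\alpha}$ via spheres, convert the radius into $n^{1/d}$, and put the leftover mass $1-C'\beta$ at $0$. However, the conversion step is wrong as written. Minimality of $r_n$ gives $|B^*_{r_n}|<n$, hence $n>c''r_n^d$. That is an \emph{upper} bound, $r_n<(n/c'')^{1/d}$. Since $d-\alpha<0$, you need a \emph{lower} bound on $r_n+1$ to make $(r_n+1)^{d-\alpha}$ small, so passing from $n>c''r_n^d$ to $(r_n+1)\ge c'''n^{1/d}$ reverses the inequality.

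The fix uses the other half of the definition of $r_n$, namely $|B^*_{r_n+1}|\ge n$. Taking a constant $c_d$ with $|B^*_m|\le c_d m^d$ for all $m\ge1$ (e.g.\ $c_d=3^d$), you get
\begin{equation*}
n\le |B^*_{r_n+1}|\le c_d\,(r_n+1)^d, \qquad\text{hence}\qquad r_n+1\ge (n/c_d)^{1/d}
\end{equation*}
for every $n\ge1$. This holds uniformly, including $r_n=0$, so no special treatment of small $n$ is needed. With this one-line replacement your proof is complete, with $C'$ depending only on $(d,\alpha)$.

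A side remark in your favour: your index $r_n$ (the smallest $r$ with $|B^*_{r+1}|\ge n$) makes $\pi([n,\infty))=1-\gamma_{r_n}$ exact for every $n$. The paper's $r(n)=\max\{r:N_r\le n\}$ gives $1-\gamma_{r(n)}$ only when $n$ is not of the form $N_k$. For $n=N_k$ with $k\ge1$ the tail is $1-\gamma_{k-1}$. This off-by-one is harmless and is absorbed into $C'$.
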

N.B. The above inequality is in fact true for all $\beta$, but we need to take $\beta <1/C'$ so that $n \mapsto C'\beta n^{1-\alpha/d}$ defines an appropriate tail function of a probability measure on $\N$ (see the proof of the lemma for more details).

\subsubsection{Reproduction law with regularly varying tails}
We now consider a Galton--Watson process $(\hat Z_n)_{n \geq0}$ of reproduction law $\hat \pi$, coupled with $(Z_n)_{n\geq0}$ in such a way that $Z_n \leq \hat Z_n$ for all $n$ with probability 1 (this is possible to do so by considering $(\hat \xi_{n,i})_{i,n \in \N}$ i.i.d. of law $\hat \pi$ such that a.s. $\xi_{i,n} \leq \hat \xi_{i,n}$, and constructing $\hat Z_n$ the obvious way).
We choose $\beta$ sufficiently small to make $(\hat Z_n)_{n \geq 0}$ subcritical, that is
$$\beta < \left(C'\sum_{n \geq 1} n^{1-\alpha/d}\right)^{-1}.$$
Then, a.s. $T \leq \hat T:=\sum_{n\geq0} \hat Z_n$, and we can apply Theorem 8.2.4 in \cite{Borovkov} to $\hat T$, since $\hat \pi$ has regularly varying tails.\footnote{More precisely, to apply the statement, we use the fact that the shifted version of $\hat \pi$ obtained by subtracting 1 to the outcome, which is the jump distribution of the random walk, has regularly varying tails.}
It induces
\begin{equation}\label{eq:random_walk_tail}
    \P(T >n) \leq \P(\hat T>n) \underset{n \to \infty}{\sim} \E[\hat T] \hat \pi([1-m_1 n, \infty)),
\end{equation}
where $m_1=C'\beta \sum_{n \geq 1} n^{1-\alpha/d}-1<0$ is the mean of the reproduction law $\hat \pi$ minus 1.
Then, by the Doob's convergence theorem for positive supermartingales applied to $(V_{n \wedge \hat T})_{n \geq 0}$, and because $\hat T$ is finite almost surely, we obtain
$$-1= \lim_{n \to \infty }\E[V_{n \wedge \hat T}]=\lim_{n \to \infty} m_1 \E[\hat T \wedge n],$$ so $\E[\hat T]=-1/m_1.$

Therefore, combining \eqref{eq:domination}, \eqref{eq:random_walk_tail} and Lemma \ref{lemma:regular_variation}, we deduce the existence of a constant $C>0$, depending only on $\alpha$, $\beta$ and $d$, such that
$$\forall n \geq 1, \quad \P(|B_R|>n) \leq Cn^{1-\alpha/d}.$$
This concludes the proof, according to the estimates \eqref{eq:asymptotic_size} on the asymptotic size of a ball in $\Z^d$.

\begin{proof}[Proof of Lemma \ref{lemma:regular_variation}]
    Let $N_r:=|B_r^*|$ and define $r(n):=\max\{r:N_r\leq n\}$, so that $r(n)$ is the unique integer such that $N_{r(n)} \leq n < N_{r(n)+1}$. Telescoping the probabilities $\gamma_{r+1}-\gamma_r$ over $r\geq r(n)$ gives, for every $n\geq1$,
    $$\pi([n,\infty))=1-\gamma_{r(n)}.$$
    Hence, by Lemma \ref{lemma:tail_gamma_r} and the inequality $\tanh(t) \leq t$ for $t\geq0$,
    $$\pi([n,\infty)) \leq \beta \sum_{|x|>r(n)}|J(x)|.$$
    The assumption \eqref{eq:form_J} on $J$ implies that $\sum_{|x|>r(n)} |J(x)|$ is less than a constant times $r(n)^{d-\alpha}$;
    combined with \eqref{eq:asymptotic_size}, which yields $r(n) \sim v_d^{-1/d} n^{1/d}$, we obtain
    \begin{equation*}
        \pi([n,\infty)) \leq C' \beta n^{(d-\alpha)/d}
    \end{equation*}
    for some constant $C'>0$ that does not depend on $\beta$, which is the claimed inequality.
    To deduce that $\pi$ is dominated by a distribution with regularly varying tails, we first choose $\beta<1/C'$, and set $\hat \pi(\{0\}) :=1-C'\beta$. Then, set for all $n\geq 1$, $\hat \pi(\{n\}):=C'\beta(n^{1-\alpha/d}-(n+1)^{1-\alpha/d})$, so that we exactly have $\hat \pi([n,\infty))=C'\beta n^{1-\alpha/d}$. Clearly, $\hat \pi$ has regularly varying tails, with index $1-\alpha/d$.
    Moreover, we have $\pi([n,\infty)) \leq \hat \pi([n,\infty))$, i.e. $\hat \pi$ stochastically dominates $\pi$.
\end{proof}

\begin{remark}\label{rem:hyp_J}
    The property \eqref{eq:form_J} on the decay of $J$ has been used in two places in the proof: to ensure that \eqref{eq:integrability_condition} holds, so that $\pi$ is integrable, and in the above proof of Lemma \ref{lemma:regular_variation} to upper bound $\pi([n, \infty))$.
    If we replace this assumption on $J$ by
    $$|J(x)| \leq |x|^{-\alpha} L(|x|)$$
    for some slowly varying function $L$, we obtain in this last proof
    $$\forall n \geq1, \quad \pi([n,\infty)) \leq \beta \sum_{|x|>r(n)} |x|^{-\alpha} L(|x|).$$
    According to \eqref{eq:asymptotic_size} and by Karamata's theorem (see \cite[Theorem 1.5.11]{Bingham_Goldie_Teugels_1987}) for regularly varying sequences, one has
    $$\beta \sum_{|x|>r(n)} |x|^{-\alpha} L(|x|) \sim \beta \dfrac{\kappa_d}{\alpha-d}r(n)^{d-\alpha}L(r(n)) \quad \text{ as } n\to \infty.$$
    Since $r(n) \sim v_d^{-1/d} n^{1/d}$, the uniform convergence theorem for slowly varying functions \cite[Theorem 1.2.1]{Bingham_Goldie_Teugels_1987} implies that $L(r(n)) \sim L(n^{1/d})$, and we obtain
    $$\beta\sum_{|x|>r(n)} |x|^{-\alpha} L(|x|) \sim \beta\dfrac{\kappa_d}{\alpha-d} v_d^{(\alpha-d)/d} n^{(d-\alpha)/d} L(n^{1/d}) \quad \text{ as } n\to \infty.$$
    Applying Karamata's theorem again to the sequence in the RHS gives that $\pi$ is integrable if $\alpha>2d$, or even if $\alpha=2d$ and $L(n)=\ln(n)^{-(1+\varepsilon)}$ for example.
    Then, taking $\beta$ small enough, one can define a distribution $\hat \pi$ with regularly varying tails that stochastically dominates $\pi$, satisfying
    $$\forall n \geq 1, \quad \hat \pi([n,\infty))= \sup_{k \geq n} C'\beta k^{1-\alpha/d} L(k^{1/d}) \underset{n \to \infty}{\sim}C'\beta n^{1-\alpha/d} L(n^{1/d}) $$
    for some constant $C'$ (N.B. the above asymptotic equivalent, which justifies that $\hat \pi$ has regularly varying tails of index $1-\alpha/d$, is an immediate consequence of \cite[Theorem 1.5.3]{Bingham_Goldie_Teugels_1987}).
    One can then conclude as in the proof of Theorem \ref{thm:main} to obtain
    $$\forall n \geq 1, \quad \P(|B_R| >n) \leq C n^{1-\alpha/d} L(n^{1/d}).$$
\end{remark}

\subsection*{Acknowledgments}
I thank Vincent Beffara for our fruitful discussions during the early stages of this project, and also Loren Coquille, Paul Dario and Arnaud Le Ny for valuable discussions about long-range models.
Finally, I'm grateful to Jean-René Chazottes for encouraging me to write this paper.

\printbibliography
\end{document}